\documentclass{amsart}
\usepackage{amsfonts}
\usepackage{graphicx}
\usepackage{amscd}
\usepackage{amsmath}
\usepackage{amssymb}
\usepackage{stmaryrd} %produto kulkarni-nomizo
\usepackage{enumitem}

\usepackage{tikz-cd}
\usepackage[pdf]{pstricks}
\usepackage{chemfig}
\usepackage{chemmacros}
\usetikzlibrary{cd}

\makeatletter
\@namedef{subjclassname@2010}{%
	\textup{2010} Mathematics Subject Classification}
\makeatother

\setcounter{MaxMatrixCols}{30}
\theoremstyle{plain}

\newtheorem{corollary}{\bf Corollary}

\newtheorem{proposition}{\bf Proposition}

\newtheorem{theorem}{\bf Theorem}

\theoremstyle{definition}

\numberwithin{equation}{section}

\title[Hamilton-Ivey estimates for the Ricci-Bourguignon flow with $\rho<0$]{Hamilton-Ivey estimates for the Ricci-Bourguignon flow with $\rho<0$}

\author{Valter Borges}

\address[V. Borges]{Faculdade de Matem\'{a}tica, Universidade Federal do Par\'{a}\\
	66075-110 Be\-l\'{e}m, Par\'{a}, Brazil.}
\email{valterborges@ufpa.br}

\subjclass[2020]{Primary 47J35, 53C21}
\keywords{Ricci-Bourguignon flow, Hamilton-Ivey Estimate, Ancient Solutinos, Positive Curvature}

%47J35, % nonlinear evolution equations
%53C21 \\% Methods of global Riemannian geometry, including PDE methods; curvature restrictions

\begin{document}

\begin{abstract}
In this paper, we prove Hamilton-Ivey estimates for the Ricci-Bourguignon flow on a compact manifold, with $n=3$ and $\rho<0$. As a consequence, we prove that compact ancient solutions have nonnegative sectional curvature for all negative $\rho$.
\end{abstract}

%\begin{abstract}
%	The goal of this paper is to show that compact ancient solutions of the Ricci-Bourguignon flow have nonnegative sectional curvature for $\rho<0$ and to provide different versions of the Hamilton-Ivey estimate for such values of $\rho$.
%\end{abstract}

%\begin{abstract}
%In this paper we provide Hamilton-Ivey type estimates subject to different positivity curvature assumptions and prove that all ancient solutions with $\rho<0$ have nonnegative.
%\end{abstract}

\date{}

\maketitle{}

\section{Main Results}
%Hamilton-Ivey inequality is... [Ivey], [Hamilton], [Hamilton]
Following \cite{catino1}, we say that a 1-parameter family of Riemannian metrics $\{g(t)\}_{t\in[0,T)}$ on a manifold $M^{n}$ is a solution of the \textit{Ricci-Bourguignon flow} if it satisfies the equation
\begin{equation}\label{defRB}
	\frac{\partial}{\partial t}g=-2(Ric-\rho Rg),
\end{equation}
where $\rho$ is a real parameter. The tensor on the right hand side of (\ref{defRB}) has special interest for certain values of $\rho$. For instance, if $\rho=0$ it is the Ricci tensor and the corresponding flow gives rise to the Ricci flow. When $\rho=1/2(n-1)$ the corresponding tensor is the Schouten tensor, an important tensor in conformal geometry, which gives rise to the Schouten flow. 

The Ricci flow is an important tool in the investigation of geometry and topology of three manifolds. In \cite{ham3}, it was shown that the universal covering of a compact three manifold with positive Ricci curvature is isometric to $\mathbb{S}^{3}$. One important feature here is that the Ricci flow develops singularities everywhere at the same finite time. When the initial metric does not have positive Ricci curvature, the situation changes completely, once the manifold may develop complicated singularities \cite{hamilton3}. Despite this complex behavior, Hamilton \cite{ham1,hamilton3} and Ivey \cite{ivey}, independently, discovered a property satisfied by any solution of the Ricci flow, known as Hamilton-Ivey estimate, which says that under the Ricci flow the metric pinches towards positive curvature. This estimate was extended to $\rho\in[0,1/6)$, and to $\rho\in[1/6,1/4)$ under the assumption that the scalar curvature is nonnegative (however, this hypothesis can be removed, see Theorem \ref{HYestimate}). On the other hand, it is stated in \cite[Chapter $5$]{cremaschi} that a Hamilton-Ivey estimate similar to \cite[Theorem 4.1]{ham1} is not expected to be true when $\rho<0$.

Motivated by results of Ivey, \cite[Theorem 2]{ivey}, and Hamilton, \cite[Theorem 24.4]{hamilton3}, for $\rho=0$, we prove the following estimate, which helps to understand the behavior of the Ricci-Bourguignon flow for $\rho<0$. Consider the convex increasing function $f(x)$, $x\in[e^{1-4\rho},+\infty)$, defined by
\begin{align}\label{deff}
	f(x)=\frac{1}{2(1-2\rho)}x(\log x-2(1-2\rho))\in\left[-\frac{e^{1-4\rho}}{2(1-2\rho)},+\infty\right).
\end{align}

\begin{theorem}\label{HYestimate2}
	Let $M^{3}$ be a compact three manifold, $\rho<0$ and $g_{0}$ a Riemannian metric on $M$. Let $g(t)$, $t\in[0,T)$, be a solution of $(\ref{defRB})$ with $g(0)=g_{0}$, corresponding to $\rho$. If
	\begin{align}\label{estimate2}
		R\geq-\frac{e^{1-4\rho}}{1-2\rho}\ \ \text{and}\ \ Ric\geq-f^{-1}\left(R/2\right)g
	\end{align}
	at $t=0$, then these inequalities continue to be satisfied for $t\in(0,T)$.
\end{theorem}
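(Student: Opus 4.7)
The plan is to apply Hamilton's maximum principle for systems to a convex invariant subset of the bundle of algebraic curvature operators, treating the scalar bound first and then the Ricci pinching.

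A direct computation of the evolution of the scalar curvature under (\ref{defRB}) yields, in dimension three,
\begin{equation*}
\partial_t R \;=\; (1-4\rho)\Delta R + 2|Ric|^2 - 2\rho R^2.
\end{equation*}
Since $|Ric|^2 \ge R^2/3$ and $\rho<0$, the zeroth-order reaction term is bounded below by $(2/3 - 2\rho)R^2 \ge 0$, so the ordinary parabolic maximum principle shows that $R_{\min}$ is non-decreasing along the flow. This already preserves the first inequality in (\ref{estimate2}).

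For the Ricci bound, let $\nu(Rm)$ denote the smallest eigenvalue of the Ricci tensor associated with an algebraic curvature operator $Rm$, and consider the fiberwise set
\begin{equation*}
\mathcal{K}_p \;=\; \bigl\{\, Rm : \nu(Rm) + f^{-1}(R(Rm)/2) \ge 0 \,\bigr\}.
\end{equation*}
Because $\nu$ is concave in $Rm$ (being the minimum of linear functionals), $R$ is linear, and $f^{-1}$ is concave on its range (the function $f$ being convex and increasing on $[e^{1-4\rho},\infty)$), the set $\mathcal{K}_p$ is convex, and invariance under parallel transport is automatic. By Hamilton's maximum principle for systems (\cite{hamilton3}), it therefore suffices to verify that $\mathcal{K}_p$ is preserved by the ODE obtained from the evolution equation of $Rm$ under (\ref{defRB}) by dropping the diffusion.

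This ODE check is the crux of the argument. Writing sectional curvatures as $\alpha\le\beta\le\gamma$, so that $\nu=\alpha+\beta$ and $\gamma = R/2-\nu$, the Ricci-Bourguignon evolution equations in dimension three yield an ODE of the schematic form $\dot\nu = \alpha^2+\beta^2+\nu\gamma+\mathcal{E}_\rho$ and $\dot R = 2|Ric|^2-2\rho R^2$, with $\mathcal{E}_\rho$ collecting the corrections arising from the term $-\rho Rg$ in (\ref{defRB}). Setting $X=-\nu\ge e^{1-4\rho}$ at a boundary configuration where $\nu+f^{-1}(R/2)=0$, invariance of $\mathcal{K}_p$ reduces to
\begin{equation*}
\dot\nu + \frac{1}{2f'(X)}\,\dot R \;\ge\; 0.
\end{equation*}
The definition (\ref{deff}) of $f$ is tuned precisely so that the logarithmic factor in $f'(X) = \frac{1}{2(1-2\rho)}(\log X - 1 + 4\rho)$ balances the $X\log X$ contribution coming from the estimate $\alpha^2+\beta^2\ge\nu^2/2$ in $\dot\nu$, while the term $-2\rho R^2>0$ in $\dot R$ (which has no counterpart in the Ricci-flow case) supplies the surplus needed to absorb $\mathcal{E}_\rho$ when $\rho<0$. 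The main obstacle is thus the algebraic verification of this inequality via careful bookkeeping of the Ricci-Bourguignon corrections in $\dot\nu$ and $\dot R$, handling separately the degenerate point $X=e^{1-4\rho}$ where $f'(X)=0$ (which coincides with the saturation of the scalar estimate, already taken care of in the first step). Once the ODE is shown to be inward-pointing on $\partial\mathcal{K}_p$, the tensor maximum principle yields preservation of $\mathcal{K}_p$ along the flow, completing the proof.
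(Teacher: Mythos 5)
Your proposal follows the same overall strategy as the paper: treat the scalar bound first (via the evolution of $R$, with the reaction term $2|Ric|^2-2\rho R^2\ge 0$ when $\rho<0$), then define the fiberwise convex set cut out by the Ricci pinching inequality, invoke the tensor maximum principle, and reduce to checking that the associated ODE leaves the set invariant. The convexity argument you give (concavity of the smallest Ricci eigenvalue and of $f^{-1}$, linearity of $R$) is the same one used in the paper, and your reduction to $\dot\nu+\frac{1}{2f'(X)}\dot R\ge 0$ at a boundary point is a correct reformulation of what needs to be shown; the paper encodes the same condition via the normalized quantity $\Lambda=-\lambda/(\mu+\nu)-\frac{1}{2(1-2\rho)}\ln(-\mu-\nu)\ge 0$ and shows $\Lambda'\ge 0$.

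However, the proposal stops precisely where the actual work begins. The sentence \emph{``The main obstacle is thus the algebraic verification of this inequality via careful bookkeeping\ldots''} followed by \emph{``Once the ODE is shown to be inward-pointing\ldots''} does not carry out that verification; it merely asserts that it can be done. In the paper this is the bulk of the proof: one computes $\Lambda'=2(\mu+\nu)^{-2}J$ with $J$ an explicit cubic polynomial in $\lambda,\mu,\nu$, and then shows $J\ge 0$ by a case analysis on the sign of $\lambda+\mu+\nu$, and within each case on the sign of $\mu$, using $\rho<0$ repeatedly to control the $\mathcal{E}_\rho$-type terms. Your heuristic remarks (that $\alpha^2+\beta^2\ge\nu^2/2$ produces an $X\log X$ term balanced by $f'$, and that $-2\rho R^2>0$ absorbs the rest) point in the right direction but are not a proof; in particular the $-2\rho R^2$ term is only helpful when $R>0$, and one of the paper's two main cases is precisely $\lambda+\mu+\nu<0$, where that term has the wrong sign and a different grouping of terms is needed. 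In addition, your treatment of the degenerate point $X=e^{1-4\rho}$, where $f'(X)=0$, is not actually resolved by the first step — saturating the scalar bound does not by itself guarantee that the Ricci pinching is preserved there; the paper's $\Lambda$-formulation is chosen exactly to avoid dividing by $f'$ and handles that corner uniformly. As written, the proposal is an outline of the correct strategy with the central estimate missing.
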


In what follows we denote the eigenvalues of the curvature operator of a solution of $(\ref{defRB})$ by $\lambda\geq\mu\geq\nu$. Thus, the eigenvalues of its Ricci tensor are $\lambda+\mu\geq\lambda+\nu\geq\mu+\nu$ and its scalar curvature is $R=2(\lambda+\mu+\nu)$.

Any metric $g_{0}$ on a compact $M^3$ can be rescaled to satisfy $(\ref{estimate2})$. This is achieved by asking the least eigenvalue of the Ricci curvature of $g_{0}$ to satisfy $\mu+\nu\geq-\frac{e^{1-4\rho}}{3(1-2\rho)}.$
%\begin{align*}
%	\mu+\nu\geq-\frac{e^{1-4\rho}}{3(1-2\rho)}.
%\end{align*}
In this case, if at a positive time $t_{0}$ the metric $g(t_{0})$ has a Ricci curvature negative enough, say $\mu+\nu\leq-e^{2(1-4\rho)}$, then $(\ref{estimate2})$ becomes equivalent at $t_{0}$ to
\begin{eqnarray*}
	\lambda\geq\frac{1}{2(1-2\rho)}|\mu+\nu|\log|\mu+\nu|,
\end{eqnarray*}
which means that a very negative Ricci curvature only occurs if there is a positive sectional curvature which is much larger than its absolute value.

When the scalar curvature of $g_{0}$ is positive, another interpretation for Theorem $\ref{HYestimate2}$ goes as follows: once $M$ is compact, $g(t)$ develops singularity at a finite time $T$, and thus, $R$ goes to $+\infty$ as we get close to $T$; once $\lim_{y\rightarrow+\infty}f^{-1}(y)/y=0$, $(\ref{estimate2})$ implies that the Ricci curvature tends to becoming nonnegative. It turns out that if the the initial metric has nonnegative scalar curvature and some negative Ricci curvature, then we are able to obtain the following time dependent estimate.
\begin{theorem}\label{HYestimatewithR}
	Let $M^{3}$ be a compact manifold, $\rho<0$ and $g_{0}$ a Riemannian metric on $M$ with nonnegative scalar curvature. If $g(t)$, $t\in[0,T)$, is the solution of the Ricci-Bourguignon flow corresponding to $\rho$, satisfying $g(0)=g_{0}$, then
	\begin{eqnarray}\label{estimate3}
		R\geq\frac{1}{1-2\rho}|\mu+\nu|(\log|\mu+\nu|+\log(1-4\rho t)-2(1-2\rho)),
	\end{eqnarray}
	at any point $(p,t)$ where $\mu+\nu$, the smallest Ricci curvature of $g(t)$, is negative.
\end{theorem}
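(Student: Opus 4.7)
The plan is to adapt the tensor maximum principle argument behind Theorem \ref{HYestimate2} by incorporating a time-dependent factor into the pinching function, and then to exploit the preservation of nonnegative scalar curvature to absorb the corresponding time derivative. To begin, the condition $R \geq 0$ is preserved by the flow: for $n=3$, the scalar curvature evolves as
\begin{equation*}
\partial_t R = 2(1-4\rho)\,\Delta R + 2|\mathrm{Ric}|^2 - 2\rho R^2,
\end{equation*}
and since $\rho<0$, the coefficient $1-4\rho$ is positive and both $2|\mathrm{Ric}|^2$ and $-2\rho R^2$ are nonnegative, so the scalar maximum principle yields $R(\cdot, t) \geq \min_M R(g_0) \geq 0$ for all $t\in[0,T)$. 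In particular, at every spacetime point where $\mu+\nu<0$ one automatically has $R\geq 0$.

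Next, introduce the time-dependent pinching function
\begin{equation*}
\tilde f(x,t) := \frac{1}{2(1-2\rho)}\,x\bigl(\log x + \log(1-4\rho t) - 2(1-2\rho)\bigr) = \frac{f\bigl((1-4\rho t)x\bigr)}{1-4\rho t},
\end{equation*}
so that $(\ref{estimate3})$ is exactly the pointwise inequality $R \geq 2\tilde f(|\mu+\nu|, t)$. The aim is to prove via Hamilton's maximum principle for systems that the set of curvature operators satisfying both $R \geq 0$ and either $\mu+\nu \geq 0$ or $R \geq 2\tilde f(-(\mu+\nu), t)$ is preserved under the flow. Wherever $|\mu+\nu| \leq e^{2(1-2\rho)}/(1-4\rho t)$, the barrier $2\tilde f(|\mu+\nu|, t)$ is nonpositive, so the set is automatically preserved by the $R \geq 0$ clause from the first step; the substantive content therefore lies in the complementary regime, where $(1-4\rho t)|\mu+\nu| \geq e^{2(1-2\rho)}$.

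Along the reaction ODE for the eigenvalues of the curvature operator, one must verify that at any saturation point, where $w := \mu+\nu < 0$ and $R = 2\tilde f(-w, t)$,
\begin{equation*}
\dot R + 2\tilde f_x(-w, t)\,\dot w - 2\tilde f_t(-w, t) \geq 0.
\end{equation*}
The new term relative to the proof of Theorem \ref{HYestimate2} is
\begin{equation*}
-2\tilde f_t(-w, t) = \frac{4\rho\, w}{(1-2\rho)(1-4\rho t)} \leq 0,
\end{equation*}
and must be compensated by the strictly positive contribution $-2\rho R^2$ in $\dot R$. This term was unavailable in Theorem \ref{HYestimate2} (where $R$ could be negative) but is at our disposal here thanks to the preservation of $R \geq 0$. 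Substituting the saturation identity $R = 2\tilde f(-w, t)$ into $-2\rho R^2$ shows that matching the time derivative of the barrier against the $-2\rho R^2$ contribution is precisely what forces the explicit factor $\log(1-4\rho t)$ in the definition of $\tilde f$.

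The hard part will be this final algebraic ODE inequality: one must carefully track how the $\rho$-dependent reaction terms in the eigenvalue evolution combine with the time-dependent barrier, and verify that the sum of all contributions is nonnegative at saturation. As in Hamilton's treatment \cite[Theorem 24.4]{hamilton3} of the Ricci-flow case, the cleanest route is probably to split into subregions according to the sign of a suitable auxiliary quantity (for instance $\lambda+\nu$), so that the dominant quadratic balance between the reaction terms $\lambda^2$, $\mu\nu$ and the logarithmic barrier remains transparent. Once the pointwise ODE inequality is verified in each subregion, Hamilton's maximum principle for systems upgrades it to the claimed PDE statement and concludes the proof.
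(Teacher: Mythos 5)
Your plan has the right skeleton — time‑dependent pinching set $W^{\rho}_{p}(t)$, preservation of $R\geq0$, tensor maximum principle, and an ODE verification at saturation — but the proposal defers the part that carries all the mathematical content. You write that ``the hard part will be this final algebraic ODE inequality'' and then suggest, without carrying it out, that one should ``split into subregions according to the sign of a suitable auxiliary quantity (for instance $\lambda+\nu$).'' That unverified step is precisely the proof. A blind proof attempt that stops before the computation has a genuine gap: it is not clear that your proposed split would succeed, nor whether the $-2\rho R^{2}$ contribution alone dominates the $-2\tilde f_{t}$ term across all sign configurations of $\lambda,\mu,\nu$.

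What you also miss — and what makes the paper's proof very short — is that the required pointwise inequality was already established in the proof of Theorem~\ref{HYestimate2}. The paper works with the normalized quantity $\Lambda=-\lambda/(\mu+\nu)-\tfrac{1}{2(1-2\rho)}\log(-\mu-\nu)$ rather than with $G=R-2\tilde f$ directly; the $\log(1-4\rho t)$ term then depends only on $t$ and separates out, so Proposition~\ref{proptobe2} reduces everything to showing $\Lambda'\geq\tfrac{2\rho}{1-2\rho}(\mu+\nu)$ on the bad set $\lambda+\mu+\nu\geq0$, $\mu+\nu\leq-\tfrac{1}{1-4\rho t}$. But this is exactly inequality $(\ref{uselater})$, already proved in the case $\lambda+\mu+\nu\geq0$ of Theorem~\ref{HYestimate2}. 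In particular, the compensation of the time derivative of the barrier does not come from an $R^{2}$ term as you suggest: it comes from the defining constraint $\mu+\nu\leq-\tfrac{1}{1-4\rho t}$ of $(P_{3})_{t}$, which gives $\tfrac{2\rho}{1-2\rho}(\mu+\nu)\geq-\tfrac{2\rho}{(1-2\rho)(1-4\rho t)}=\bigl(\tfrac{1}{2(1-2\rho)}\log(1-4\rho t)\bigr)'$, and then one integrates the monotone quantity $\Lambda-\tfrac{1}{2(1-2\rho)}\log(1-4\rho t)$ from an initial time where it is already nonnegative. Your heuristic points roughly in the right direction ($\rho<0$ produces favorable $\rho$-dependent reaction terms), but it does not pin down the mechanism and would need to be made precise.

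Two smaller issues: the diffusion coefficient in the scalar curvature evolution is $(1-2(n-1)\rho)=1-4\rho$ for $n=3$, not $2(1-4\rho)$ as you wrote (harmless for the sign argument), and there is a sign slip in $-2\tilde f_{t}(-w,t)$, which should read $\dfrac{-4\rho w}{(1-2\rho)(1-4\rho t)}$ rather than $\dfrac{4\rho w}{(1-2\rho)(1-4\rho t)}$; the inequality $-2\tilde f_{t}(-w,t)\leq0$ you state is correct for the intended expression, so this appears to be a typo.
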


If, on the other hand, the initial metric has nonnegative Ricci curvature, then we have an estimate similar to \cite[Theorem 4.1]{ham1}.
\begin{theorem}\label{HYestimateRicci>0}
	Let $M^{3}$ be a compact three manifold, $\eta>0$, $\rho\in(-1/\eta,0)$ and $g_{0}$ a Riemannian metric on $M$ with nonnegative Ricci curvature and satisfying $\displaystyle\min_{p\in M}\nu_{0}(p)\geq-1$, where $\nu_{0}$ is the smallest sectional curvature of $g_{0}$. If $g(t)$, $t\in[0,T)$, is the solution of $(\ref{defRB})$ with $g(0)=g_{0}$, corresponding to $\rho$, then the scalar curvature $R(t)$ of $g(t)$ satisfies
	\begin{eqnarray}\label{estimate<0}
		R\geq-\frac{1}{\rho}|\nu|(\log|\nu|+\log(1+2(1+\eta\rho)t)+6\rho),
	\end{eqnarray}
	at any point $(p,t)$ where the smallest sectional curvature  $\nu(p,t)$ of $g_{p}(t)$ is negative.
\end{theorem}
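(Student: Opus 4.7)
The plan is to apply Hamilton's vectorial maximum principle to a time-dependent convex subset of the bundle of algebraic curvature operators, following the template of \cite{ham1} and of Theorem~\ref{HYestimate2}. Let $\tau(t) = 1 + 2(1 + \eta\rho)t$, which is positive for all $t \geq 0$ thanks to the hypothesis $\rho \in (-1/\eta, 0)$. For each $t \geq 0$ define
\[
K(t) = \{R_m : \nu \geq 0\} \cup \Bigl\{R_m : \nu < 0\ \text{and}\ R + \tfrac{1}{\rho}|\nu|(\log|\nu| + \log\tau(t) + 6\rho) \geq 0\Bigr\}.
\]
The inequality (\ref{estimate<0}) is exactly the membership condition for the second piece of $K(t)$, so it suffices to prove that $K(t)$ is preserved by the flow. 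The preliminary verifications---fiberwise convexity, invariance under parallel transport, and the initial inclusion $R_m(0)\in K(0)$---parallel those done for Theorem~\ref{HYestimate2} and in \cite{ham1}: parallel-transport invariance is automatic, and the initial inclusion is immediate because $\tau(0)=1$ and $|\nu_0|\leq 1$ make the right-hand side of the pinching inequality non-positive at $t=0$, so it is implied by $R_0\geq 0$.

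The core step is showing that $K(t)$ is preserved by the ODE system obtained from the Ricci-Bourguignon evolution of the curvature operator by dropping the Laplacian, i.e.\ the $\rho$-modified Hamilton system derivable from the evolution equations in \cite{catino1}: one has $dR/dt = 2|Ric|^2 - 2\rho R^2$ for the scalar, together with a Hamilton-type ODE $d\nu/dt = \nu^2 + \lambda\mu + \rho\,P(\lambda,\mu,\nu)$ for the smallest eigenvalue. Differentiating $F = R + \tfrac{1}{\rho}|\nu|(\log|\nu| + \log\tau + 6\rho)$ along this system and, at a boundary point where $F = 0$, using the identity $\log|\nu| + \log\tau + 6\rho = -\rho R/|\nu|$ to replace the logarithms by $R/|\nu|$, one reduces the claim $\partial_tF \geq 0$ to a polynomial inequality in $\lambda,\mu,\nu,\rho,\tau$.

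The main obstacle will be establishing this polynomial inequality. The hypothesis $Ric \geq 0$, which is preserved under the Ricci-Bourguignon flow by a separate standard convex-set argument, forces $\lambda,\mu \geq |\nu|$ on the $\nu < 0$ region and yields lower bounds such as $|Ric|^2 \geq R^2/3$; these control the Hamilton-type contributions $\nu^2+\lambda\mu$ and $|Ric|^2$. The $\rho$-corrections are favorably signed because $\rho<0$ and contribute a term of order $R|\nu|$; the residual after balancing against the $-|\nu|\tau'/\tau$ piece is proportional to $1+\eta\rho$, and $\tau$ is tuned precisely so that this residual has the right sign under the hypothesis $\rho\in(-1/\eta,0)$. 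Once $\partial_tF\geq 0$ is established along the ODE, Hamilton's vectorial maximum principle applied to the Ricci-Bourguignon flow closes the argument.
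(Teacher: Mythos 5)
Your strategy is the same as the paper's: set up a time-dependent fiberwise-convex subset of the curvature bundle and invoke the time-dependent vectorial maximum principle from \cite{catino1}, reducing everything to an ODE-invariance check. Your pinching function $F$ is equivalent to the paper's $\xi$ (Proposition \ref{proptobe}) with the specific choice $\theta=-1/(2\rho)$, and your normalization and initial-inclusion checks are correct.

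However, there are two genuine problems. First, the set you define, $K(t)=\{\nu\geq 0\}\cup\{\nu<0,\ F\geq 0\}$, does not encode the nonnegative-Ricci constraint, yet your sketch of the ODE step explicitly uses $\lambda,\mu\geq|\nu|$, which follows only from $\mu+\nu\geq 0$. The vectorial maximum principle requires the ODE to preserve the set fiberwise for \emph{all} initial data in the fiber, not just data arising from the actual solution, so you cannot borrow $Ric\geq 0$ from a ``separate'' argument --- it must be intersected into the invariant set (this is exactly why the paper introduces $Y^{\eta,\rho}_p(t)=K^{\eta,\rho}_p(t)\cap\{\mu+\nu\geq 0\}$ rather than $K^{\eta,\rho}_p(t)$ alone). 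Second, and more seriously, you do not actually establish the polynomial inequality. You identify it as ``the main obstacle'' and offer sign heuristics ($\rho<0$ making corrections favorable, $\tau$ tuned to absorb the residual), but these are not a proof; the substance of the theorem is precisely the chain of estimates
\begin{align*}
\nu^2\xi'\ \geq\ &(\lambda'+\mu')(-\nu)+(\lambda+\mu)\nu'-\theta\nu\nu'+2\theta(1+\eta\rho)\nu^3\\
\geq\ &\cdots\ \geq\ 2(\lambda+\mu)\nu^2(1+2\theta\rho)+2\theta(2+\eta)\rho\nu^3\ \geq\ 0,
\end{align*}
carried out in the paper using $2\rho\theta=-1$ together with $\lambda\geq\mu\geq-\nu>0$. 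Without that computation the argument is incomplete; in particular, the claim that the $\rho$-correction to $\nu'$ is ``favorably signed'' is not obvious (one has $-4\rho\nu(\lambda+\mu+\nu)\leq 0$ when $\nu<0$, $R\geq 0$, which pushes $|\nu|$ up and hence pushes $F$ in the wrong direction), so the sign bookkeeping genuinely needs to be done.
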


A solution for the Ricci-Bouguignon flow is called \textit{ancient\ solution} if it is defined in an interval of the form $(-\infty,t_{0}]$, for a $t_{0}\in\mathbb{R}$. These solutions may model singularities that form in finite time thus, it is important to understand its geometry. It is known that such solutions have nonnegative scalar curvature, for $\rho\leq1/2(n-1)$. When $n=3$ and $\rho=0$, Hamilton \cite{ham1} showed that ancient solutions have nonnegative sectional curvature. The latter result was extended later by Catino {\it et al.} in \cite{catino1} for $\rho\in[0,1/4)$. These results follow from the respective Hamilton-Ivey estimates and a parabolic dilation, which takes in advantage the fact that ancient solutions are defined for all negative $t$. The details can be found in \cite{RFI}. Following the same strategy of \cite{RFI}, Theorem \ref{HYestimatewithR} and Theorem \ref{HYestimateRicci>0} can be used to extend this result on ancient solutions to all negative values of $\rho$.

\begin{theorem}\label{geometricconsequence}
	Any compact ancient solution of the Ricci-Bourguignon flow corresponding to $\rho<0$ has nonnegative sectional curvature.
\end{theorem}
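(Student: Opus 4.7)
The plan is to follow the parabolic rescaling argument developed for the Ricci flow in \cite{RFI}, adapting it to the two time-dependent Hamilton--Ivey estimates proved above. The core idea is that if negative curvature existed at some point $(p_0,t_\ast)$ of the ancient solution, then applying the estimates to arbitrarily far-back time shifts of $g(t)$ would force the finite value at $(p_0,t_\ast)$ to be bounded below by an expression that diverges as $s\to-\infty$. I will execute this in two stages: first use Theorem \ref{HYestimatewithR} to rule out negative Ricci curvature, then combine a parabolic rescaling with Theorem \ref{HYestimateRicci>0} to rule out negative sectional curvature. Throughout I rely on the known fact, cited in the introduction, that compact ancient Ricci--Bourguignon solutions satisfy $R\geq 0$ when $\rho\leq 1/(2(n-1))$.

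For stage one, suppose by contradiction that the smallest Ricci eigenvalue satisfies $(\mu+\nu)(p_0,t_\ast)<0$ at some point. For each $s<t_\ast$ the shifted flow $g_s(t):=g(s+t)$ is a Ricci--Bourguignon solution on $[0,t_0-s]$ whose initial metric $g_s(0)=g(s)$ has $R\geq 0$, so Theorem \ref{HYestimatewithR} applies. Evaluating the estimate at $(p_0,t_\ast-s)$ gives
\[
R(p_0,t_\ast)\geq\frac{1}{1-2\rho}|\mu+\nu|(p_0,t_\ast)\bigl(\log|\mu+\nu|(p_0,t_\ast)+\log(1-4\rho(t_\ast-s))-2(1-2\rho)\bigr).
\]
Since $\rho<0$, the factor $\log(1-4\rho(t_\ast-s))$ tends to $+\infty$ as $s\to-\infty$, so the right-hand side becomes unbounded while the left-hand side is a fixed number; this contradiction forces $\mu+\nu\geq 0$ everywhere on the ancient solution.

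For stage two, suppose $\nu(p_0,t_\ast)<0$ at some $(p_0,t_\ast)$, and for each $s<t_\ast$ let $K_s:=\max\bigl(1,-\min_{p\in M}\nu(p,s)\bigr)$, finite by compactness of $M$. The parabolically rescaled flow $\hat g_s(t):=K_s\,g(s+t/K_s)$ is again a Ricci--Bourguignon solution for the same $\rho$; by stage one $\hat g_s(0)$ has nonnegative Ricci, and by construction its smallest sectional curvature is $\geq-1$. Fix $\eta\in(0,-1/\rho)$, e.g.\ $\eta=-1/(2\rho)$ so that $1+\eta\rho=1/2$. Theorem \ref{HYestimateRicci>0} applied to $\hat g_s$ at $(p_0,\hat t_s)$ with $\hat t_s:=K_s(t_\ast-s)$, after using $\hat R=R/K_s$, $\hat\nu=\nu/K_s$ and multiplying through by $K_s$, reads
\[
R(p_0,t_\ast)\geq-\frac{1}{\rho}|\nu(p_0,t_\ast)|\bigl(\log|\nu(p_0,t_\ast)|+\log\bigl(K_s^{-1}+(t_\ast-s)\bigr)+6\rho\bigr).
\]
The main obstacle is controlling the rescaling factor $K_s$, which may grow as $s\to-\infty$; but the normalization $K_s\geq 1$ forces $K_s^{-1}\leq 1$, so $K_s^{-1}+(t_\ast-s)\geq t_\ast-s\to+\infty$ regardless of the behavior of $K_s$. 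The right-hand side therefore diverges, contradicting the finiteness of $R(p_0,t_\ast)$, and we conclude $\nu\geq 0$ throughout, finishing the proof.
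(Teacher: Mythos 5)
Your two-stage strategy --- first rule out negative Ricci curvature via Theorem~\ref{HYestimatewithR} and a time shift, then rule out negative sectional curvature via Theorem~\ref{HYestimateRicci>0} and a parabolic rescaling --- is precisely the paper's strategy, and your stage two is carried out correctly: the choice $\eta=-1/(2\rho)$ gives $2(1+\eta\rho)=1$, the rescaled initial metric $\hat g_s(0)$ has $\hat\nu\geq -1$ by construction, and the bound $K_s^{-1}\leq 1$ shows the unknown growth of the normalization constant cannot prevent the divergence of $\log(K_s^{-1}+(t_\ast-s))$.

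The one place you deviate is stage one, and it contains a gap. You apply Theorem~\ref{HYestimatewithR} to the merely shifted flow $g_s(t)=g(s+t)$, reading the theorem's hypothesis as ``$R_0\geq 0$ only.'' But the proof of Theorem~\ref{HYestimatewithR} invokes the time-dependent maximum principle on the set $W^\rho_p(t)$, which requires the initial curvature operator to lie in $W^\rho_p(0)$; that is, $g_0$ must also satisfy $(P_3)_0$, meaning the Hamilton--Ivey inequality must already hold at $t=0$ wherever $\mu+\nu\leq -1$. This is not a consequence of $R_0\geq 0$ alone (for large $-(\mu+\nu)$ it forces $\lambda$ to be correspondingly large), and it is the analogue of the explicit normalization $\min\nu_0\geq -1$ in Theorem~\ref{HYestimateRicci>0}. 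This is exactly why the paper's own proof says ``Theorem~\ref{HYestimatewithR} \emph{and a parabolic dilation}'' in stage one as well. The fix is the same trick you already use in stage two: set $L_s=\max\bigl(1,-\min_M(\mu+\nu)(\cdot,s)\bigr)$, pass to $\tilde g_s(t)=L_s\,g(s+t/L_s)$, whose initial data has $\mu+\nu\geq -1$ and hence lies vacuously in $W^\rho(0)$, and apply the estimate at $\tilde t_s=L_s(t_\ast-s)$; after unscaling, the relevant logarithm becomes $\log\bigl(L_s^{-1}-4\rho(t_\ast-s)\bigr)\geq\log\bigl(-4\rho(t_\ast-s)\bigr)\to+\infty$, which closes the argument.
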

\begin{proof}
	Let $\rho<0$, fix $\eta>0$ so that $\rho>-1/\eta$ and let $g(t)$ be an ancient solution of $(\ref{defRB})$. As $g(t)$ has nonnegative scalar curvature, we use Theorem \ref{HYestimatewithR} and a parabolic dilation to argue that these metrics cannot have a negative Ricci curvature. Now, using the fact that $g(t)$ has nonnegative Ricci curvature, we use Theorem \ref{HYestimateRicci>0} and a parabolic dilation to show that $g(t)$ cannot have a negative sectional curvature. This finishes the proof of Theorem \ref{geometricconsequence}.
\end{proof}

As a consequence of Theorem \ref{geometricconsequence} and the results mentioned above, we have

\begin{corollary}\label{corollary}
	Any compact ancient solution of the Ricci-Bourguignon flow corresponding to $\rho<1/4$ has nonnegative sectional curvature.
\end{corollary}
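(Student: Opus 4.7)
The plan is to observe that Corollary \ref{corollary} is simply the assembly of three previously established results, one for each of the subintervals $(-\infty,0)$, $\{0\}$, and $(0,1/4)$ of the parameter $\rho$. My proof would therefore proceed by a trivial case split on the sign of $\rho$.

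Let $g(t)$ be a compact ancient solution of the Ricci-Bourguignon flow with parameter $\rho<1/4$. If $\rho<0$, the conclusion is immediate from Theorem \ref{geometricconsequence}, which is exactly the statement that such a solution has nonnegative sectional curvature. If $\rho=0$, the flow reduces to the Ricci flow and the result is Hamilton's original three-dimensional theorem \cite{ham1} on ancient solutions. Finally, if $\rho\in(0,1/4)$, the assertion is the extension by Catino et al. \cite{catino1} of Hamilton's theorem to the Ricci-Bourguignon flow in this range of parameters. Combining the three cases covers the entire interval $(-\infty,1/4)$ and yields the corollary.

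There is no genuine obstacle in the proof of the corollary itself, since the substantive work has already been carried out, either in the cited literature or in Theorem \ref{geometricconsequence} above. The role of the corollary is purely to record that, with the addition of the new estimates for $\rho<0$ provided by Theorems \ref{HYestimatewithR} and \ref{HYestimateRicci>0}, nonnegative sectional curvature of compact ancient solutions is now known for every $\rho<1/4$. The only unifying remark worth making in the write-up is that, in all three regimes, the proof follows the same template of a Hamilton-Ivey pinching estimate followed by a parabolic dilation argument as in \cite{RFI}; only the pinching input differs.
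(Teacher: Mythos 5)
Your proposal is correct and matches the paper exactly: the corollary is obtained by combining Theorem~\ref{geometricconsequence} for $\rho<0$ with Hamilton's result for $\rho=0$ and the Catino et al.\ extension for $\rho\in[0,1/4)$. No further argument is needed.
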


This paper is organized in the following way. In the next section we recall important facts concerning the Ricci-Bourguignon flow, stated and proved in \cite{catino1}, in order to prove our results. In the third and last section we prove our estimates. We also give a proof of the Hamilton-Ivey estimate when $\rho\in[1/6,1/4)$ which does not depend on assumptions on the scalar curvature (see Theorem \ref{HYestimate}).

\section{The equation of the curvature tensor}
The curvature tensor $Rm(t)$ of a Ricci-Bourguignon flow $g(t)$ satisfies a reaction-diffusion equation, which possesses a complicated quadratic reaction term. To understand this reaction term, one performs the Uhlembeck's trick, which consists, roughly speaking, of considering a family of linear bundle isometries $\varphi(t):(V,h)\rightarrow(TM,g(t))$, where $(V,h)$ is a metric bundle over $M$, carefully defined so that the pull-back $P(t)=\varphi(t)^{*}Rm(t)$ satisfies a much simpler equation, which we present below. For more details, see \cite{catino1,cremaschi,HRF,RFTAII}.
\begin{proposition}[\cite{catino1}]\label{propP(t)satisfiesPDE}
	The family of operators $P(t):\wedge^{2}V\rightarrow\wedge^{2}V$, $t\in[0,T)$, has the same eigenvalues as $Rm(t)$. Furthermore, $P(t)$ evolves according to the equation
	\begin{equation}\label{PDE}
		\frac{\partial}{\partial t}P=\mathcal{L}P+2P^{2}+2P^{\#}-4\rho tr_{g_{0}}(P)P,
	\end{equation}
	where $\mathcal{L}$ is a differential operator, which is uniformly elliptic if $\rho<1/(2(n-1))$.
\end{proposition}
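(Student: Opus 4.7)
The plan is to execute Uhlenbeck's trick adapted to the Ricci-Bourguignon flow, pulling back the curvature operator via a carefully chosen family of bundle isometries $\varphi(t)$ so that the ambient metric deformation is absorbed into a reaction-diffusion equation with a clean algebraic structure.

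First I would take $(V,h) := (TM, g_0)$ and define $\varphi(t):V\to TM$ as the solution of the ODE
\begin{equation*}
\frac{\partial}{\partial t}\varphi = (Ric - \rho R g)\circ \varphi, \qquad \varphi(0)=\mathrm{id}_{TM}.
\end{equation*}
Using $\partial_t g = -2(Ric - \rho R g)$, a direct computation gives $\partial_t(\varphi^{*}g) = 0$, so $\varphi(t)^{*}g(t) = h$ for all $t$. In particular, the induced map $\wedge^{2}\varphi(t):\wedge^{2}V \to \wedge^{2}TM$ is a linear isometry, and $P(t) = \varphi(t)^{*}Rm(t)$ is conjugate to $Rm(t)$, which immediately yields the statement about eigenvalues.

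Next I would derive $(\ref{PDE})$. The starting point is the evolution equation for $Rm$ under the Ricci-Bourguignon flow, schematically of the form
\begin{equation*}
\partial_t Rm = \Delta Rm + Q(Rm) + \rho\,\mathcal{R}(R, \nabla^{2}R, Rm),
\end{equation*}
where $Q(Rm)$ is Hamilton's quadratic reaction and $\mathcal{R}$ collects the new contributions coming from the extra term $\rho R g$ in the metric evolution (in particular $\nabla^{2}R$ enters through the contracted second Bianchi identity). Applying $\varphi^{*}$, the $\partial_t \varphi$ contribution cancels precisely the non-tensorial pieces arising from time-differentiating $\varphi^{*}Rm$. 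Because $h = \varphi^{*}g$ is time-independent, the transported quadratic part reassembles into the standard Hamilton expression $2(P^{2}+P^{\#})$ on $(\wedge^{2}V, h)$. The contribution from the $\rho R g$ term, together with the identity $R = 2\,\mathrm{tr}_{g_0}(P)$ valid under the isometry, produces the new reaction term $-4\rho\,\mathrm{tr}_{g_0}(P)P$; everything of second order is collected into the operator $\mathcal{L}$, which is $\Delta_{g(t)}$ plus a $\rho$-dependent correction from the $\nabla^{2}R$ terms.

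Finally I would verify the ellipticity claim by computing the principal symbol of $\mathcal{L}$. The contracted Bianchi identity shows that the correction from differentiating $\rho R g$ twice scales like $-2(n-1)\rho|\xi|^{2}\mathrm{Id}$ at the symbol level, so the total symbol becomes $(1 - 2(n-1)\rho)|\xi|^{2}\mathrm{Id}$, which is positive definite precisely when $\rho < 1/(2(n-1))$. The hardest part of the argument is the careful bookkeeping of the Ricci-Bourguignon specific terms: deciding which pieces of the $\rho R g$ contribution are absorbed into $\mathcal{L}$, which become the new reaction term $-4\rho\,\mathrm{tr}_{g_0}(P)P$, and which cancel against $\partial_t\varphi$. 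Once this separation is done, the identification of the remaining quadratic piece with $2(P^{2}+P^{\#})$ is purely algebraic and proceeds exactly as in Hamilton's treatment of the Ricci flow, since after pulling back by $\varphi$ the computation takes place in the fixed inner product space $(V,h)$.
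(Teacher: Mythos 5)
This proposition is not proved in the paper; it is quoted directly from \cite{catino1}, with the reader referred to \cite{catino1,cremaschi,HRF,RFTAII} for the computation. So there is no internal proof to compare against, and your proposal has to stand on its own.

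The core of your argument is sound and is in fact the same Uhlenbeck-trick route that \cite{catino1} follows. Your ODE $\partial_t\varphi=(Ric-\rho R g)\circ\varphi$ with $\varphi(0)=\mathrm{id}$ is the correct choice of gauge: the computation $\partial_t(\varphi^{*}g)=0$ works exactly as you write it, $\wedge^{2}\varphi$ is then a bundle isometry, $P=\varphi^{*}Rm$ is conjugate to $Rm$, and the eigenvalue statement follows. The identity $R=2\,\mathrm{tr}_{g_0}(P)$ is correct, and the separation of the pulled-back evolution equation into second-order terms (into $\mathcal{L}$), the Hamilton reaction $2(P^{2}+P^{\#})$, and the new zeroth-order reaction $-4\rho\,\mathrm{tr}_{g_0}(P)P$ is the right organizational scheme.

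The gap is in the ellipticity argument, and it is a real one. You claim that the principal symbol of $\mathcal{L}$ becomes $(1-2(n-1)\rho)\,|\xi|^{2}\mathrm{Id}$. This cannot be correct. The $\rho$-dependent second-order correction to $\mathcal{L}$ comes from the linearization of $Rm$ in the direction of $\rho R g$, which produces a Kulkarni--Nomizu term of the schematic form $\rho\,(\nabla^{2}R)\owedge g$. At the symbol level this is a map on algebraic curvature tensors that factors through the scalar $\mathrm{tr}(Q)$ and is therefore a \emph{low-rank} perturbation of $|\xi|^{2}\mathrm{Id}$, not a scalar rescaling of the whole identity. The symbol of $\mathcal{L}$ is $|\xi|^{2}\mathrm{Id}$ on the orthogonal complement of the ``trace'' direction and only the eigenvalue in the direction coupled to $\mathrm{tr}(Q)$ degenerates as $\rho\to 1/(2(n-1))$; that is where the threshold comes from. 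So while your conclusion $\rho<1/(2(n-1))$ is the right number, the mechanism you give is wrong, and the invocation of the contracted second Bianchi identity (which is a first-order identity used to rewrite Hamilton's second-order terms as $\Delta Rm$, not to compute the symbol of the new $\rho$-terms) does not actually justify the factor $-2(n-1)\rho$. To close this you would need to write down the symbol as $|\xi|^{2}\mathrm{Id}$ plus the explicit $\rho$-correction operator, diagonalize it on the space of curvature tensors (as \cite{catino1} do for the linearized RB operator itself), and check that every eigenvalue of the symbol remains positive exactly when $\rho<1/(2(n-1))$.
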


Once the differential operator above is uniformly elliptic for $\rho<1/(2(n-1))$, it is possible to use the {\it Tensor Maximum Principle} \cite[Theorem 4.8]{catino1} or its time dependent version \cite[Theorem 4.9]{catino1} to investigate $P(t)$ through equation $(\ref{PDE})$.
% In the statement below, $(E,h,D(t))$, $t\in[0,T)$, denotes a vector bundle over $(M,g(t))$, where $h$ is a bundle metric and $D(t)$ is a family of linear connections compatible with $h$.
%
%\begin{theorem}[\cite{catino1}]\label{MPS} Let $u:[0,T)\rightarrow\Gamma(E)$ be a smooth solution of the parabolic equation 
%	\begin{equation}\label{parabequat}
%		\frac{\partial}{\partial t}u=\mathcal{L}u+F(u,t)
%	\end{equation}
%	and $F:E\times[0,T)\rightarrow E$ a continuous map, locally Lipschtiz in the $E$ factor and $F(v,t)\in E_{p}$ for every $p\in M$, $v\in E_{p}$ and $t\in[0,T)$. For every $t\in[0,T)$, let $K(t)\subset E$ be a closed subbundle, invariant under parallel translation with respect to $D(t)$, convex in the fibers and such that the space time track
%	\begin{equation}
%		\mathcal{T}=\{(v,t)\in E\times\mathbb{R}:v\in K(t),\ t\in[0,T)\}
%	\end{equation}
%	is closed in $E\times[0,T)$. Suppose that for every $t_{0}\in[0,T)$, $K(t_{0})$ is preserved by the associated ODE
%	\begin{equation}\label{ODEparabequat}
%		\frac{\partial}{\partial t}Q=F(Q,t)
%	\end{equation}
%	i.e., any solution $Q(t)$ of the ODE (\ref{ODEparabequat}) that starts in $K(t_{0})_{p}$ remains in $K(t)_{p}$, as long as it exists. If $u(0)$ is contained in $K(0)$, then $u(p,t)\in K(t)_{P}$, for every $p\in M$, $t\in[0,T)$.	
%\end{theorem}
Both theorems provide sufficient conditions to find subsets of $End_{SA}(\wedge^{2}T_{p}M)$, the bundle of self-adjoint endomorphisms of $\wedge^{2}V$, which are preserved by PDE $(\ref{PDE})$. We refer the reader to Section 4 of \cite{catino1} for the precise statements and further details.

One of the conditions required by the maximum principles for a subset of $End_{SA}(\wedge^{2}T_{p}M)$ to be preserved by PDE $(\ref{PDE})$, is the invariance of this set by the ODE associated to equation (\ref{PDE}), obtained by dropping the elliptic operator, which reads
\begin{equation}\label{associatedODE}
	\frac{\partial}{\partial t}Q=2Q^{2}+2Q^{\#}-4\rho tr_{g_{0}}(Q)Q.
\end{equation}
In the particular case of $n=3$, a family of operators $Q_{p}(t)\in End_{SA}(\wedge^{2}T_{p}M)$ solves (\ref{associatedODE}) if and only if its eigenvalues $\lambda,\ \mu$ and $\nu$ satisfy the system 

\begin{equation}\label{system}
	\left\{
	\begin{array}[pos]{ll}
		\lambda'=2\lambda^2+2\mu\nu-4\rho\lambda(\lambda+\mu+\nu)\\\noalign{\smallskip}
		\mu'=2\mu^{2}+2\lambda\nu-4\rho\mu(\lambda+\mu+\nu)\\\noalign{\smallskip}
		\nu'=2\nu^{2}+2\lambda\mu-4\rho\nu(\lambda+\mu+\nu).
	\end{array}
	\right.
\end{equation}
We observe that the inequalities $\lambda\geq\mu\geq\nu$ are preserved by (\ref{system}). These inequalities are going to be assume from now on. For more details, see \cite{catino1,ham1}.

\section{Proofs}

Now we use the Vector Maximum Principle \cite[Theorem 4.8]{catino1} to prove Theorem \ref{HYestimate2}.

\begin{proof}[{\bf Proof of Theorem \ref{HYestimate2}}]
Consider the set $X_{p}^{\rho}$ of all $\mathcal{O}_{p}\in End_{SA}(\wedge^{2}T_{p}M)$ satisfying
\begin{equation}\label{estimate2'}
	\left\{
	\begin{array}[pos]{ll}
		\lambda+\mu+\nu\geq-\frac{e^{1-4\rho}}{2(1-2\rho)},\\\noalign{\smallskip}
		\mu+\nu\geq-f^{-1}(\lambda+\mu+\nu).
	\end{array}
	\right.
\end{equation}
Notice that $(\ref{estimate2'})$ is equivalent to $(\ref{HYestimate2})$. In order to prove Theorem \ref{HYestimate2}, we need to prove that $X_{p}^{\rho}$ is preserved by PDE $(\ref{PDE})$. This will be done by using the Vector Maximum Principle \cite[Theorem 4.8]{catino1}. It is easy to see that $X_{p}^{\rho}$ is closed and invariant by parallel translations. To see that it is convex, observe that $f^{-1}$ is concave, $\lambda+\mu+\nu$ is linear, and that $\mu+\nu$ is the least eigenvalue of the Ricci tensor, and hence, concave. 

Now we prove that $X_{p}^{\rho}$ is invariant by ODE $(\ref{system})$. This task is accomplished by proving that $(\ref{estimate2'})$ is preserved at the boundary of $X_{p}^{\rho}$. The first inequality of $(\ref{estimate2'})$ is clearly preserved, once
\begin{eqnarray}\label{import_inequ}
	(tr(Q))'\geq\frac{4}{3}(1-3\rho)tr(Q)^{2}\geq0,
\end{eqnarray}
on $[0,T)$, which is a general inequality obtained by adding the three equations of (\ref{system}). Now we deal with the second inequality of $(\ref{estimate2'})$. It is sufficient to assume that equality holds. In this case, using $(\ref{deff})$ we get
\begin{align}\label{sombonpon}
	\mu+\nu=-f^{-1}(\lambda+\mu+\nu)\leq-e^{1-4\rho}<0.
\end{align}
Now observe that at the points that satisfy the inequality above, the second inequality of $(\ref{estimate2'})$ is equivalent to $\lambda+\mu+\nu\geq f(-\mu-\nu)$, and by $(\ref{deff})$, equivalent to the inequality
\begin{equation}\label{estimate2''}
	\Lambda=-\frac{\lambda}{\mu+\nu}-\frac{1}{2(1-2\rho)}\ln(-\mu-\nu)\geq0.
\end{equation}
We will show that at the points satisfying $(\ref{sombonpon})$, $\Lambda'\geq0$. Using $(\ref{system})$, we conclude that $\Lambda'=2(\mu+\nu)^{-2}J$, where
\begin{align}\label{computed}
	\begin{split}
		J=&\lambda(\mu^2+\nu^2)-(\mu+\nu)\mu\nu-\frac{1}{2(1-2\rho)}(\mu+\nu)(\mu^2+\nu^2)\\
		  &-\frac{1}{2(1-2\rho)}\lambda(\mu+\nu)^2+\frac{\rho}{1-2\rho}(\mu+\nu)^2(\lambda+\mu+\nu).
	\end{split}
\end{align}
We need to show that $J\geq0$. We divide the proof into two cases, according to the sign of $\lambda+\mu+\nu$. From $(\ref{sombonpon})$, we already have $\mu+\nu<0$ and $\nu<0$.

Suppose that $\lambda+\mu+\nu<0$. In this case we can estimate $(\ref{computed})$ in the following way
\begin{align*}
		J=&\lambda(\mu^2+\nu^2)-(\mu+\nu)\mu\nu-\frac{1}{1-2\rho}\lambda\mu\nu\\
		&\frac{\rho}{1-2\rho}(\mu+\nu)^2(\lambda+\mu+\nu)-\frac{1}{2(1-2\rho)}(\lambda+\mu+\nu)(\mu^2+\nu^2)\\
		\geq&\lambda(\mu^2+\nu^2)-(\mu+\nu)\mu\nu-\frac{1}{1-2\rho}\lambda\mu\nu.
\end{align*}
If either $\lambda\leq0$ or $\mu\geq0$, then the last term in the inequality above is nonnegative. Consequently, $J\geq\mu^2(\lambda-\nu)+\nu^2(\lambda-\mu)\geq0$. If $\mu\leq0\leq\lambda$, since $\rho<0$, we have $J\geq\lambda\nu^2-\lambda\mu\nu=-\lambda\nu(\mu-\nu)\geq0$. Thus, $J\geq0$ in this case.

Now suppose that $\lambda+\mu+\nu\geq0$. Observe that $\lambda\geq0$ in this case. From $(\ref{computed})$ we have
\begin{align*}
		J-\frac{\rho}{1-2\rho}(\mu+\nu)^3=&\lambda(\mu^2+\nu^2)-(\mu+\nu)\mu\nu-\frac{1}{2(1-2\rho)}(\mu+\nu)(\mu^2+\nu^2)-\frac{1}{2}\lambda(\mu+\nu)^2\\
		\geq&\lambda(\mu^2+\nu^2)-(\mu+\nu)\mu\nu-\frac{1}{2}\lambda(\mu^2+\nu^2)-\lambda\mu\nu\\
		=&\frac{1}{2}\lambda(\mu^2+\nu^2)-(\mu+\nu)\mu\nu-\lambda\mu\nu
\end{align*}
Now suppose that $\mu\leq0$. Then
\begin{align*}
		J-\frac{\rho}{1-2\rho}(\mu+\nu)^3&\geq\frac{1}{2}\lambda(\mu^2-2\mu\nu+\nu^2)-(\mu+\nu)\mu\nu\\
		&=\frac{1}{2}\lambda(\mu-\nu)^2-(\mu+\nu)\mu\nu\geq0.
\end{align*}
On the other hand, if $\mu\geq0$, then 
\begin{align*}
	J-\frac{\rho}{1-2\rho}(\mu+\nu)^3\geq\frac{1}{2}\lambda(\mu^2+\nu^2)-(\lambda+\mu+\nu)\mu\nu\geq0.
\end{align*}
In any case, if $\lambda+\mu+\nu\geq0$, then
\begin{align}\label{uselater}
	J\geq\frac{\rho}{1-2\rho}(\mu+\nu)^3\geq0.
\end{align}
This finishes the proof.
\end{proof}

In order to proceed with the proofs, we consider time dependent subsets of $End_{SA}(\wedge^{2}V)$ for which we intend to use the time dependent version of the Vector Maximum Principle.

Given $\mathcal{O}_{p}\in End_{SA}(\wedge^{2}T_{p}M)$, denote its ordered eigenvalues by $\lambda\geq\mu\geq\nu$. Consider real numbers $\eta,\rho$ and $\theta>0$. For each $t\in[0,T)$, define the properties $(P_{1})_{t}$, $(P_{2})_{t}$ and $(P_{3})_{t}$ as: if $1+\eta\rho>0$, put
\begin{align*}
	\hspace{-2.1cm}(P_{1})_{t} &\ \ \lambda+\mu+\nu\geq-\displaystyle\frac{3}{1+2(1+\eta\rho)t},\\\noalign{\smallskip}
	\hspace{-2.1cm}(P_{2})_{t} &\ \ \text{If}\ \nu\leq-\displaystyle\frac{1}{1+2(1+\eta\rho)t},\ \text{then} \\
	\hspace{-2.1cm}&\lambda+\mu+\nu\geq-\theta\nu(\log(-\nu)+\log(1+2(1+\eta\rho)t)-3\theta^{-1}),
\end{align*}
and if $\rho<0$, put
\begin{align*}
	(P_{3})_{t} &\ \ \text{If}\ \mu+\nu\leq\displaystyle-\frac{1}{1-4\rho t},\ \text{then} \\\noalign{\smallskip}
	 &\ \ \lambda+\mu+\nu\geq-\frac{1}{2(1-2\rho)}(\mu+\nu)(\log(-\mu-\nu)+\log(1-4\rho t)-2(1-2\rho)).
\end{align*}
Finally, for each $t\in[0,T)$, we define the time dependent sets
\begin{align*}
	&K^{\eta,\rho}_{p}(t)=\left\{ \mathcal{O}_{p}\in End_{SA}(\wedge^{2}T_{p}M):\mathcal{O}_{p}\ \text{satisfies}\ (P_{1})_{t} \ \text{and}\ (P_{2})_{t}\right\},\\
		&Y^{\eta,\rho}_{p}(t)=\left\{ \mathcal{O}_{p}\in K^{\eta,\rho}_{p}(t):\mathcal{O}_{p}\ \text{satisfies}\ \mu+\nu\geq0 \right\},\\
	&W^{\rho}_{p}(t)=\left\{ \mathcal{O}_{p}\in End_{SA}(\wedge^{2}T_{p}M):\mathcal{O}_{p}\ \text{satisfies}\ \lambda+\mu+\nu\geq0 \ \text{and}\ (P_{3})_{t}\right\}.
\end{align*}

We note that when $\rho=0$, then $K^{\eta,0}_{p}(t)$ agrees with the set used by Hamilton in \cite{ham1} to prove the classical Hamilton-Ivey estimate. When $\rho\in[0,1/6)$ and $\eta=-6$, then $K^{\eta,\rho}_{p}(t)$ agrees with the set used by Catino \textit{et al.} in \cite{catino1}, placing $\theta=1$.

In order to use the time dependent version of Vector Maximum Principle, \cite[Theorem 4.9]{catino1}, we need to check the following properties for a set $K(t)\subset End_{SA}(\wedge^{2}TM)$:
\begin{enumerate}
	\item[(1)] Convexity;
	\item[(2)] Invariance under parallel translations;
	\item[(3)] Closedness of the set, $\{(v,t)\in E\times\mathbb{R}:v\in K(t),\ t\in[0,T)\}$, the track of $K(t)$;
	\item[(4)] Invariance of $K_{p}(t)$ under the system of ODE's (\ref{system}).
\end{enumerate}

When $\rho\in[0,1/6)$ and $\eta=-6$, $K^{\eta,\rho}_{p}(t)$ was proven to satisfy $(1)$-$(4)$. This proof can be found in \cite{ham1,catino1}. The corresponding proofs of $(1)$-$(3)$ can be adapted with no significant changes for any $\eta$ and $\rho$ so that $1+\eta\rho>0$. Similar arguments show that the sets $K^{\eta,\rho}_{p}(t)$, $Y^{\eta,\rho}_{p}(t)$ and $W^{\rho}_{p}(t)$ satisfy $(1)$-$(3)$. This is the content of the next result.
\begin{proposition}\label{otherissues}
	The sets $K^{\eta,\rho}_{p}(t)$, $Y^{\eta,\rho}_{p}(t)$ and $W^{\rho}_{p}(t)$ are convex, invariant under parallel translation and have closed space time track.
\end{proposition}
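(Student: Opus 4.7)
The plan is to verify, for each of $K^{\eta,\rho}_{p}(t)$, $Y^{\eta,\rho}_{p}(t)$, and $W^{\rho}_{p}(t)$, the three properties of convexity, invariance under parallel translation, and closedness of the space--time track, by reducing each defining condition to a scalar inequality in three eigenvalue functionals. The essential structural facts I use repeatedly are: $\lambda+\mu+\nu=tr(\mathcal{O}_{p})$ is linear in $\mathcal{O}_{p}$; the smallest eigenvalue $\nu(\mathcal{O}_{p})=\min_{|\omega|=1}\langle\mathcal{O}_{p}\omega,\omega\rangle$ is concave in $\mathcal{O}_{p}$ as an infimum of linear functionals; and $\mu+\nu=tr(\mathcal{O}_{p})-\lambda(\mathcal{O}_{p})$, the smallest eigenvalue of the associated Ricci-type form, is concave for the same reason. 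All three are continuous in $\mathcal{O}_{p}$ and depend only on its spectrum, which makes invariance under parallel translations of $\wedge^{2}V$ immediate for every defining inequality.

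For convexity of $K^{\eta,\rho}_{p}(t)$, I combine $(P_{1})_{t}$ and $(P_{2})_{t}$ into the single inequality $\lambda+\mu+\nu\geq\Phi(\nu,t)$, where $\Phi(\cdot,t)$ equals the constant $-3/(1+2(1+\eta\rho)t)$ on $\nu\geq-(1+2(1+\eta\rho)t)^{-1}$ and equals $\max\bigl(-3/(1+2(1+\eta\rho)t),\,h(\nu,t)\bigr)$ below the threshold, with $h(\nu,t):=-\theta\nu(\log(-\nu)+\log(1+2(1+\eta\rho)t)-3\theta^{-1})$. A direct computation gives $h_{\nu\nu}=\theta/|\nu|>0$, so $h(\cdot,t)$ is convex on $(-\infty,0)$; together with the identity $h\bigl(-(1+2(1+\eta\rho)t)^{-1},t\bigr)=-3/(1+2(1+\eta\rho)t)$ and $h\to+\infty$ as $\nu\to-\infty$, the max-with-constant construction produces a $\Phi(\cdot,t)$ that is continuous, convex, and non-increasing in $\nu$, provided that the secondary crossing of $h$ with the constant value occurs at a point where $h'$ is negative. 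Then $-\Phi(\cdot,t)$ is concave and non-decreasing, so composing with the concave $\nu(\mathcal{O}_{p})$ yields a concave function of $\mathcal{O}_{p}$, and adding the linear $tr(\mathcal{O}_{p})$ preserves concavity; hence $K^{\eta,\rho}_{p}(t)$ is a superlevel set of a concave function, therefore convex. The same scheme, with $\mu+\nu$ in place of $\nu$, threshold $-(1-4\rho t)^{-1}$, and $g_{3}(w,t):=-\tfrac{1}{2(1-2\rho)}w(\log(-w)+\log(1-4\rho t)-2(1-2\rho))$ in place of $h$, gives convexity of $W^{\rho}_{p}(t)$; and $Y^{\eta,\rho}_{p}(t)$ is the intersection of $K^{\eta,\rho}_{p}(t)$ with the convex half-space $\{\mu+\nu\geq 0\}$, hence convex as well.

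For closedness of the space--time tracks, $tr(\mathcal{O}_{p})$, $\nu(\mathcal{O}_{p})$, and $(\mu+\nu)(\mathcal{O}_{p})$ are continuous in $\mathcal{O}_{p}$, while $h(\nu,t)$ and $g_{3}(w,t)$ are continuous on their open domains, so the effective lower bounds $\Phi$ and its $W^{\rho}_{p}$-analogue $\Psi$ are jointly continuous in their arguments on $[0,T)$; each track is then the superlevel set $\{F\geq 0\}$ of a continuous function, hence closed in $End_{SA}(\wedge^{2}V)\times[0,T)$, and the track of $Y^{\eta,\rho}_{p}$ is further intersected with the closed condition $\mu+\nu\geq 0$. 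The main obstacle is the convexity step: one must confirm that the piecewise function $\Phi$ (and analogously $\Psi$) is globally convex and non-increasing in its scalar argument, which away from the junction with the constant piece is a one-line second-derivative computation, but at the junction requires checking that the logarithmic branch crosses the constant from above with non-positive slope. This is where the particular shifts $3\theta^{-1}$ and $2(1-2\rho)$ inside the logarithms of $(P_{2})_{t}$ and $(P_{3})_{t}$ are calibrated precisely to place the secondary crossing of the constant value strictly to the left of the minimum of $h$ or $g_{3}$, where the slope is automatically negative; once this calibration is verified, the remainder of the verification is a direct adaptation of the arguments of \cite{ham1,catino1}.
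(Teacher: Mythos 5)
The paper itself offers no proof of this proposition: it observes that the corresponding verifications of (1)--(3) for the case $\rho\in[0,1/6)$, $\eta=-6$ in \cite{ham1,catino1} ``can be adapted with no significant changes'' and leaves the adaptation to the reader. Your argument fills in exactly those standard steps -- spectral functionals give parallel invariance, $\nu$ and $\mu+\nu$ are concave as infima of linear functionals (equivalently $\mathrm{tr}-\lambda$ with $\lambda$ convex), and each set is a superlevel set of $\mathrm{tr}-\Phi\circ(\text{concave functional})$ for a convex, non-increasing $\Phi$. This is precisely the route the references take, so your proof is a correct, detailed execution of the approach the paper delegates.

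Two small points would tighten it. First, you assert but do not compute the calibration at the junction; the check is short and worth recording. For $K^{\eta,\rho}_p(t)$, at the threshold $\nu^*=-\bigl(1+2(1+\eta\rho)t\bigr)^{-1}$ one finds $h(\nu^*,t)=c(t)$ and $h_\nu(\nu^*,t)=3-\theta$; when $\theta\ge 3$ the junction is at $\nu^*$ itself with non-positive slope, and when $\theta<3$ the graph of $h$ dips below $c$ so the effective junction is the earlier crossing, which lies left of $h$'s minimum where $h_\nu<0$ automatically -- either way $\Phi$ is convex. For $W^{\rho}_p(t)$ your phrasing suggests the piecewise break of $\Psi$ is at the conditional threshold $w^*=-(1-4\rho t)^{-1}$, but since $g_3(w^*,t)=-(1-4\rho t)^{-1}<0$ the constant piece there is $0$, and $\Psi$ actually transitions from $0$ to $g_3$ at $\hat w=-e^{2(1-2\rho)}/(1-4\rho t)<w^*$ where $g_3=0$; there $g_{3,w}(\hat w,t)=-1/(2(1-2\rho))<0$, so convexity again holds. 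Second, $\{\mu+\nu\ge 0\}$ is not a half-space (the functional $\mu+\nu$ is concave, not linear); it is, as you need, a convex set, being the superlevel set of $\mathrm{tr}-\lambda$.
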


Thus, the only property we have left to verify is the invariance of these sets by $(\ref{system})$. The following proposition is motivated from works of Hamilton and Ivey.

\begin{proposition}\label{proptobe2}
	Let $\rho<0$ and consider a solution $\lambda,\ \mu\ \text{and}\ \nu$ of $(\ref{system})$, corresponding to $\rho$ and defined in $[0,T)$. Consider the function $\eta:[0,T)\rightarrow\mathbb{R}$ defined by
	\begin{equation}\label{auxgeneral2}
		\begin{array}[pos]{cc}
			\displaystyle\Lambda=-\frac{\lambda}{\mu+\nu}-\frac{1}{2(1-2\rho)}\ln(-\mu-\nu).
		\end{array}
	\end{equation}
	If $\Lambda'\geq\frac{2\rho}{1-2\rho}(\mu+\nu)$ in every subset $[T_{1},T_{2}]\subset[0,T)$ for which
	\begin{equation}\label{ineq.2}
		\lambda+\mu+\nu\geq0\ \text{and}\ \mu+\nu\leq\displaystyle-\frac{1}{1-4\rho t},\ t\in[T_{1},T_{2}],
	\end{equation}
	then $W^{\rho}_{p}(t)$ is preserved by (\ref{system}).
\end{proposition}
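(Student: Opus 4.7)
The plan is to verify the two defining conditions of $W^{\rho}_{p}(t)$ separately. The inequality $\lambda+\mu+\nu\geq 0$ is preserved by (\ref{system}) on account of (\ref{import_inequ}): since $\rho<0$ gives $1-3\rho>0$, one has $(tr(Q))'\geq 0$ whenever $tr(Q)\geq 0$. Hence the trace condition is automatically preserved along (\ref{system}), and only $(P_{3})_{t}$ requires real work.

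To handle $(P_{3})_{t}$, I first recast it in terms of $\Lambda$ from (\ref{auxgeneral2}). Setting $G(t)=\frac{1}{2(1-2\rho)}\log(1-4\rho t)$, dividing the inequality in $(P_{3})_{t}$ by the positive quantity $-(\mu+\nu)$ and regrouping shows that, in the region $\mu+\nu<0$, the conclusion of $(P_{3})_{t}$ is equivalent to $\Lambda\geq G(t)$. Arguing by contradiction, suppose a solution of (\ref{system}) starting in $W^{\rho}_{p}(0)$ first exits $W^{\rho}_{p}(t)$ at some $t_{0}>0$; then necessarily $\mu+\nu(t_{0})\leq -\frac{1}{1-4\rho t_{0}}$ and $\Lambda(t_{0})<G(t_{0})$. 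Let $T_{1}\in[0,t_{0}]$ denote the left endpoint of the maximal subinterval of $[0,t_{0}]$, ending at $t_{0}$, on which $\mu+\nu(t)\leq -\frac{1}{1-4\rho t}$ holds.

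At $T_{1}$ I claim $\Lambda(T_{1})\geq G(T_{1})$: if $T_{1}=0$ this is just the initial hypothesis, and if $T_{1}>0$ continuity forces $\mu+\nu(T_{1})=-\frac{1}{1-4\rho T_{1}}$, so substituting into (\ref{auxgeneral2}) yields
\begin{equation*}
\Lambda(T_{1})=G(T_{1})+\frac{\lambda(T_{1})}{|(\mu+\nu)(T_{1})|}\geq G(T_{1}),
\end{equation*}
because the preserved trace inequality forces $\lambda(T_{1})\geq -(\mu+\nu)(T_{1})>0$. Now the hypothesis of the proposition applies on $[T_{1},t_{0}]$, giving $\Lambda'\geq\frac{2\rho}{1-2\rho}(\mu+\nu)$; combining this with $\rho<0$ and the binding condition $\mu+\nu\leq -\frac{1}{1-4\rho t}$ yields
\begin{equation*}
\Lambda'(t)\geq\frac{2\rho}{1-2\rho}(\mu+\nu)\geq\frac{-2\rho}{(1-2\rho)(1-4\rho t)}=G'(t).
\end{equation*}
Integrating from $T_{1}$ to $t_{0}$ and combining with $\Lambda(T_{1})\geq G(T_{1})$ then produces $\Lambda(t_{0})\geq G(t_{0})$, contradicting the assumed failure.

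The step I expect to be most delicate is the identification of $T_{1}$ together with the boundary identity displayed above; once that is in hand, everything else reduces to the algebraic comparison of $\frac{2\rho}{1-2\rho}(\mu+\nu)$ with $G'(t)$ and the integration of a linear differential inequality, together with the preserved nonnegativity of the trace already established in the proof of Theorem \ref{HYestimate2}.
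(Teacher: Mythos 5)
Your proposal is correct and follows essentially the same route as the paper's own proof: establishing $\Lambda\geq G(t)$ as the equivalent form of $(P_3)_t$, locating the time $T_1$ where the constraint $\mu+\nu\leq -\frac{1}{1-4\rho t}$ first starts binding (backwards from $t_0$), using the boundary identity at $T_1$ (via $\lambda\geq-(\mu+\nu)>0$ from the preserved trace), and integrating the comparison $\Lambda'\geq G'$ across $[T_1,t_0]$. The only cosmetic difference is that you phrase it as a contradiction while the paper argues forward directly from an arbitrary $t_0$, splitting off the case $\mu(t_0)+\nu(t_0)=-\frac{1}{1-4\rho t_0}$ by hand; these are logically interchangeable.
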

\begin{proof}
	Let $Q_{p}(t)=Q(t)\in End_{SA}(\wedge^{2}T_{p}M)$, $t\in[0,T)$, be a solution to the ODE (\ref{associatedODE}) with $Q(0)\in W^{\rho}_{p}(0)$ and fix $t_{0}\in(0,T)$. We will show that $Q(t_{0})\in W^{\rho}_{p}(t_{0})$.
	
	It follows from (\ref{import_inequ}) that the first inequality of $(\ref{ineq.2})$ is preserved. Now we consider the second inequality.	If the equality holds, i.e., $-(\mu(t_{0})+\nu(t_{0}))(1-4\rho t_{0})=1$, then we get at $t_{0}$
	\begin{align*}\label{torefer}
		\begin{split}
			\lambda+\mu+\nu&\geq\mu+\nu=-\frac{1}{2(1-2\rho)}(\mu+\nu)(-2(1-2\rho))\\
			&=-\frac{1}{2(1-2\rho)}(\mu+\nu)(\log(-\mu-\nu)+\log(1-4\rho t_{0})-2(1-2\rho)),
		\end{split}
	\end{align*}
	where we have used $\lambda\geq0$. If, on the other hand, the inequality $(\ref{ineq.2})$ is strict, i.e., if
	\begin{equation}\label{inequality}
		\mu+\nu<\displaystyle-\frac{1}{1-4\rho t_{0}},
	\end{equation}
	then consider the smallest number $\tilde{t}\in[0,t_{0})$ such that inequality $(\ref{inequality})$ remains true for all $t\in(\tilde{t},t_{0}]$. Consequently, $(\ref{ineq.2})$ is true in $[\tilde{t},t_{0}]$, which from our hypothesis implies that
	\begin{align}\label{nondecreasing}
		\left(\Lambda-\frac{1}{2(1-2\rho)}\ln(1-4\rho t)\right)'\geq0,
	\end{align}
	$\forall t\in[\tilde{t},t_{0}]$. Now we consider two situations. If $\tilde{t}=0$, it follows from $Q(0)\in W^{\rho}_{p}(0)$ that
	\begin{equation}\label{initialc1}
		\Lambda(0)=-\frac{\lambda(0)}{\mu(0)+\nu(0)}-\frac{1}{2(1-2\rho)}\ln(-\mu(0)-\nu(0))\geq0.
	\end{equation}
	If $\tilde{t}>0$, It follows from the continuity of $\mu+\nu$ that $(1-4\rho \tilde{t})(\mu(\tilde{t})+\nu(\tilde{t}))=-1$,	and then 
	\begin{align}\label{initialc2}
		\Lambda(\tilde{t})-\frac{1}{2(1-2\rho)}\ln(1-4\rho \tilde{t})=-\frac{\lambda(\tilde{t})}{\mu(\tilde{t})+\nu(\tilde{t})}\geq0.
	\end{align}
	Now, from $(\ref{nondecreasing})$, $(\ref{initialc1})$ and $(\ref{initialc2})$ we have
	\begin{align}
		\Lambda(t_{0})\geq\frac{1}{2(1-2\rho)}\ln(1-4\rho t_{0})
	\end{align}
	which is equivalent to
	\begin{align*}
		\begin{split}
			\lambda+\mu+\nu\geq-\frac{1}{2(1-2\rho)}(\mu+\nu)(\log(-\mu-\nu)+\log(1-4\rho t_{0})-2(1-2\rho)),
		\end{split}
	\end{align*}
	at $t_{0}$. This finally proves that $Q(t_{0})\in W_{p}^{\rho}(t_{0})$.	
\end{proof}

Now we prove Theorem \ref{HYestimatewithR}.
\begin{proof}[{\bf Proof of Theorem \ref{HYestimatewithR}}]
	According to Proposition \ref{otherissues} and Proposition \ref{proptobe2}, we only need to proof the estimate $\Lambda'\geq\frac{2\rho}{1-2\rho}(\mu+\nu)$, at every set $[T_{1},T_{2}]\subset[0,T)$ where $(\ref{ineq.2})$ happens. Notice that this is exactly estimate $(\ref{uselater})$, proved under the conditions $\lambda+\mu+\nu\geq0$ and $\mu+\nu<0$. This shows the estimate, which finishes the proof.
\end{proof}

Now we state the following proposition, whose proof is analogous to the proof of Proposition \ref{proptobe2} and can be found in \cite[Theorem 4.15]{catino1}.
\begin{proposition}\label{proptobe}
	Consider a solution $\lambda,\ \mu\ \text{and}\ \nu$ of $(\ref{system})$, corresponding to $\rho\in(-\infty,1/4)$, defined in $[0,T)$. Let also and fix $\theta>0$ and $\eta$ so that $1+\eta\rho>0$. Consider the function 
	\begin{equation}\label{auxgeneral}
		\begin{array}[pos]{cc}
			\displaystyle \xi(t)=\frac{\lambda+\mu+\nu}{-\nu}-\theta\ln(-\nu)-\theta\ln(1+2(1+\eta\rho)t).
		\end{array}
	\end{equation}
	\begin{enumerate}
		\item Suppose that $\eta=-4$ and $\rho\in[0,1/4)$. If $\xi(t)$ is nondecreasing in every subset $[T_{1},T_{2}]\subset[0,T)$ for which
		\begin{equation}\label{ineq.}
			\nu\leq-\frac{1}{1+2(1-4\rho)t},\ t\in[T_{1},T_{2}],
		\end{equation}
		then $K^{\eta,\rho}_{p}(t)$ is preserved by (\ref{system}).
		\item Suppose that $\eta>0$ and $\rho\in(-1/\eta,0]$. If $\xi(t)$ is nondecreasing in every subset $[T_{1},T_{2}]\subset[0,T)$ for which
		\begin{equation}\label{ineq..}
			\mu+\nu\geq0\ \ \text{and}\ \ \nu\leq-\frac{1}{1+2(1+\eta\rho)t},\ t\in[T_{1},T_{2}],
		\end{equation}
		then $Y^{\eta,\rho}_{p}(t)$ is preserved by (\ref{system}).
	\end{enumerate}
	
\end{proposition}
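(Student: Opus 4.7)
The plan is to mirror the proof of Proposition \ref{proptobe2}. Fix a solution $Q(t)\in End_{SA}(\wedge^{2}T_{p}M)$ of the associated ODE $(\ref{associatedODE})$ with $Q(0)$ lying in $K^{\eta,\rho}_{p}(0)$ in part (1), or in $Y^{\eta,\rho}_{p}(0)$ in part (2); pick $t_{0}\in(0,T)$ and show that $Q(t_{0})$ lies in the corresponding set at time $t_{0}$. The defining inequalities split naturally into the trace bound $(P_{1})_{t}$, the logarithmic bound $(P_{2})_{t}$, and, in part (2), the nonnegative Ricci condition $\mu+\nu\geq 0$.

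First I would dispose of $(P_{1})_{t_{0}}$ by integrating the scalar differential inequality $(\ref{import_inequ})$, namely $u'\geq\frac{4}{3}(1-3\rho)u^{2}$ with $u(0)\geq -3$, which yields $\lambda+\mu+\nu\geq -3/(1+4(1-3\rho)t)$. Since $4(1-3\rho)\geq 2(1+\eta\rho)$ in both parameter ranges of the proposition---it reduces to $\rho\leq 1/2$ in part (1), and to the trivial $4\geq 2$ in part (2)---this implies $(P_{1})_{t_{0}}$. For part (2) I would also check that $\mu+\nu\geq 0$ propagates: at a boundary point where $\mu+\nu=0$, a direct computation from $(\ref{system})$ gives $(\mu+\nu)'=2(\mu^{2}+\nu^{2})+2\lambda(\mu+\nu)-4\rho(\mu+\nu)(\lambda+\mu+\nu)=2(\mu^{2}+\nu^{2})\geq 0$.

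The crux is the logarithmic bound $(P_{2})_{t_{0}}$. On the region where $\nu\leq -1/(1+2(1+\eta\rho)t)$, the inequality $(P_{2})_{t}$ is algebraically equivalent to $\xi(t)\geq -3$, with $\xi$ the function in $(\ref{auxgeneral})$. I would split into two cases at $t_{0}$. If the threshold is attained, $\nu(t_{0})=-1/(1+2(1+\eta\rho)t_{0})$, then the two logarithmic terms in $\xi$ cancel and $\xi(t_{0})=(\lambda+\mu+\nu)/(-\nu)\geq 3\nu/(-\nu)=-3$ using $\lambda\geq\mu\geq\nu$. If the threshold is strict at $t_{0}$, take $\tilde{t}$ to be the smallest time in $[0,t_{0}]$ such that strict inequality persists on $(\tilde{t},t_{0}]$; the hypothesis of the proposition then guarantees that $\xi$ is nondecreasing on $[\tilde{t},t_{0}]$ (in part (2) this step invokes the already-propagated $\mu+\nu\geq 0$, so that $(\ref{ineq..})$ applies). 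Either $\tilde{t}=0$ and the initial condition gives $\xi(0)\geq -3$, or $\tilde{t}>0$ and continuity forces the threshold equality at $\tilde{t}$, which reduces to the previous case. Monotonicity on $[\tilde{t},t_{0}]$ then delivers $\xi(t_{0})\geq -3$.

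The part I expect to be most delicate is not computational but organizational: in part (2) one must have established $\mu+\nu\geq 0$ throughout $[\tilde{t},t_{0}]$ \emph{before} invoking the hypothesis for $\xi$, since the running assumption $(\ref{ineq..})$ requires it. Once this bookkeeping is handled and the parameter-range comparisons for $(P_{1})$ are written out, the argument is a direct transcription of the scheme used in Proposition \ref{proptobe2}.
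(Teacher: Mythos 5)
Your proposal is correct and matches the approach the paper intends: the paper omits the argument, stating it is analogous to Proposition \ref{proptobe2} (citing \cite[Theorem 4.15]{catino1}), and your proof is exactly that analogue, with the right threshold/strict case split at $t_0$ and the correct ordering --- establishing $\mu+\nu\geq 0$ before invoking the $\xi$-monotonicity hypothesis in part (2). As a minor simplification, since $\rho<1/3$ in both parameter regimes, $(\ref{import_inequ})$ already gives $(\mathrm{tr}\,Q)'\geq 0$, so $\lambda+\mu+\nu\geq-3\geq-3/(1+2(1+\eta\rho)t)$ follows directly without integrating the Riccati inequality.
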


We finally have:

\begin{proof}[{\bf Proof of Theorem \ref{HYestimateRicci>0}}]
	As in the proof of Theorem \ref{HYestimate}, we only need to show that $\xi(t)$ is nondecreasing. For this, we take $\theta=-1/2\rho$. First recall that, by hypothesis, $\rho<0$, $1+\eta\rho>0$, $\nu<0$ and $\mu+\nu\geq0$. These last two inequalities imply that $\lambda\geq\mu\geq-\nu>0$. On the other hand, by $(\ref{system})$ and $(\ref{ineq..})$ we get
	\begin{align*}
		\nu^2\displaystyle \xi'(t)\geq&(\lambda'+\mu')(-\nu)+(\lambda+\mu)\nu'-\theta\nu\nu'+2\theta(1+\eta\rho)\nu^3\\
		\geq&-2\nu(\lambda^2+\mu^2)+2\lambda\mu(\lambda+\mu)-2\theta\nu(\nu^2+\lambda\mu-2\rho\nu(\lambda+\mu+\nu))+2\theta(1+\eta\rho)\nu^3\\
		\geq&2\lambda\mu(\lambda+\mu)+4\theta\rho\nu^2(\lambda+\mu)-2\theta(1-2\rho)\nu^3+2\theta(1+\eta\rho)\nu^3\\
		=&2(\lambda+\mu)(\lambda\mu+2\theta\rho\nu^2)+2\theta(-1+2\rho+1+\eta\rho)\nu^3\\
		\geq&2(\lambda+\mu)\nu^2(1+2\theta\rho)+2\theta(2+\eta)\rho\nu^3\\
		\geq&0,
	\end{align*}
	where we have used that  $2\rho\theta=-1$. This proves that $\xi(t)$ is nondecreasing in $[T_{1},T_{2}]$, which finishes the proof.
\end{proof}

We finish the paper with the proof of the Hamilton-Ivey estimate for $\rho\in[0,1/4)$, without additional assumptions.

\begin{theorem}\label{HYestimate}
	Let $M^{3}$ be a compact three manifold, $\rho\in[0,1/4)$ and $g_{0}$ a Riemannian metric on $M$ satisfying the normalizing assumption $\displaystyle\min_{p\in M}\nu_{0}(p)\geq-1$, where $\nu_{0}$ is the smallest sectional curvature of $g_{0}$. If $g(t)$, $t\in[0,T)$, is the solution of $(\ref{defRB})$ with $g(0)=g_{0}$, corresponding to $\rho$, then the scalar curvature $R(t)$ of $g(t)$ satisfies
	\begin{eqnarray}\label{estimate1}
		R\geq2|\nu|(\log|\nu|+\log(1+2(1-4\rho)t)-3),
	\end{eqnarray}
	at any point $(p,t)$ where the smallest sectional curvature  $\nu(p,t)$ of $g_{p}(t)$ is negative.
\end{theorem}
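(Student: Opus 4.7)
The plan is to invoke part (1) of Proposition \ref{proptobe}, with the choices $\theta=1$ and $\eta=-4$ (so that $1+\eta\rho=1-4\rho>0$ for $\rho\in[0,1/4)$). With these parameters, the condition $(P_2)_t$ in the definition of $K^{\eta,\rho}_p(t)$ becomes
\[
\lambda+\mu+\nu\geq -\nu\bigl(\log(-\nu)+\log(1+2(1-4\rho)t)-3\bigr),
\]
which, after multiplication by $2$ and using $R=2(\lambda+\mu+\nu)$ and $|\nu|=-\nu$, is exactly the target inequality (\ref{estimate1}). So it suffices to prove that $K^{-4,\rho}_p(t)$ is preserved by the Ricci-Bourguignon flow; by Proposition \ref{otherissues} and the time-dependent tensor maximum principle, this reduces to (i) checking that $g_0$ lies in $K^{-4,\rho}_p(0)$, and (ii) verifying the hypothesis of Proposition \ref{proptobe}(1), namely that $\xi(t)$ is nondecreasing on any interval on which (\ref{ineq.}) holds.

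For (i), the normalization $\nu_0\geq-1$ together with $\lambda\geq\mu\geq\nu$ yields $\lambda+\mu+\nu\geq 3\nu_0\geq -3$, which is $(P_1)_0$. Condition $(P_2)_0$ is only triggered when $\nu_0\leq-1$, i.e., at points where $\nu_0=-1$; there $\log(-\nu_0)=0$, so the required bound becomes $\lambda+\mu+\nu\geq-(-1)(0-3)=-3$, which again follows from $\lambda+\mu+\nu\geq 3\nu_0=-3$.

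For (ii), assume (\ref{ineq.}), so in particular $\nu<0$ and $(1+2(1-4\rho)t)(-\nu)\geq 1$. Differentiating $\xi$ and substituting $\lambda',\mu',\nu'$ from (\ref{system}), a direct computation exactly analogous to the one carried out in the proof of Theorem \ref{HYestimateRicci>0} gives
\[
\nu^2\,\xi'(t)\;\geq\;-2\nu(\lambda^2+\mu^2)+2\lambda\mu(\lambda+\mu)-2\nu\nu'+2(1-4\rho)\nu^3,
\]
where the last term arises from using the hypothesis (\ref{ineq.}) to replace $1/(1+2(1-4\rho)t)$ by $-\nu$ in the term coming from the explicit time derivative. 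Expanding $\nu'$ and collecting yields an expression in $\lambda,\mu,\nu,\rho$ that has to be shown nonnegative. Unlike in Theorem \ref{HYestimateRicci>0}, one cannot rely on $\mu+\nu\geq 0$, so one must instead exploit $\rho\geq 0$ together with $\lambda\geq\mu\geq\nu$; the natural way is a case split on the sign of $\mu$ (and, if needed, on the sign of $\lambda+\mu+\nu$), grouping terms as sums of squares times nonnegative coefficients, in the spirit of the case analyses carried out in the proof of Theorem \ref{HYestimate2} and in \cite[Theorem 4.15]{catino1}.

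The main obstacle is precisely this last algebraic step: producing an SOS-type lower bound valid on the whole region $\{\nu<0,\ \lambda\geq\mu\geq\nu\}$, without the simplifying hypothesis $\mu+\nu\geq 0$ used earlier, while keeping the coefficient of $\nu^3$ under control by means of the parameter identity $1+\eta\rho=1-4\rho$. Once that inequality is secured, Proposition \ref{proptobe}(1) closes the argument and (\ref{estimate1}) follows from $(P_2)_t$.
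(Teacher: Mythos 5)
Your framework is exactly the paper's: reduce via Propositions~\ref{otherissues} and~\ref{proptobe}(1), with $\theta=1$ and $\eta=-4$, to showing $\xi(t)$ is nondecreasing on any interval where (\ref{ineq.}) holds, and translate $(P_2)_t$ into (\ref{estimate1}). The initial-time verification in your step (i) is fine. But you have explicitly left the heart of the matter open: the inequality $\nu^2\xi'(t)\geq 0$ is not proved, only conjectured to follow from a ``case split on the sign of $\mu$ (and, if needed, on the sign of $\lambda+\mu+\nu$), grouping terms as sums of squares.'' That is a genuine gap, not a deferral of routine algebra, and moreover the splitting you guess at is not the one that works. After substituting (\ref{system}) and using (\ref{ineq.}) to absorb the explicit $t$-dependence, one must show nonnegativity of
\begin{equation*}
I=-2\nu(\lambda^{2}+\mu^{2})+2\mu\lambda(\mu+\lambda)-2\nu\mu\lambda+4\rho\nu^{2}(\lambda+\mu)-4\rho\nu^{3},
\end{equation*}
on all of $\{\nu<0,\ \lambda\geq\mu\geq\nu\}$ for $\rho\in[0,1/4)$. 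The paper's case analysis splits first on the sign of $\lambda$ (not $\mu$). When $\lambda\geq 0$, it then splits on the sign of $\mu$; when $\lambda<0$, it splits instead on whether $2\mu\leq\nu$ or $2\mu>\nu$, a comparison you do not anticipate and which is essential: the troublesome cross-term $-8\rho\nu\mu\lambda$ is introduced and removed precisely to expose the factor $4\rho\nu\lambda(\nu-2\mu)$, whose sign is controlled by this split. Without carrying out (or at least correctly identifying) that decomposition, the proposal does not establish the theorem.

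Two smaller points. First, your displayed intermediate bound
\[
\nu^2\,\xi'(t)\;\geq\;-2\nu(\lambda^2+\mu^2)+2\lambda\mu(\lambda+\mu)-2\nu\nu'+2(1-4\rho)\nu^3
\]
still contains $\nu'$, so as written it is not yet an inequality one can analyze; the paper's quantity $I$ is the result of fully expanding $\nu'$ via (\ref{system}) and simplifying, and that expansion is where the $\rho$-dependent terms $4\rho\nu^{2}(\lambda+\mu)-4\rho\nu^{3}$ come from. Second, your appeal to ``the spirit of the case analyses carried out in the proof of Theorem~\ref{HYestimate2}'' is misleading: that proof concerns the quantity $J$ tied to the Ricci eigenvalue $\mu+\nu$ and $\rho<0$, and its case structure (sign of $\lambda+\mu+\nu$, then sign of $\mu$) does not transfer to the present $I$, which is tied to $\nu$ and $\rho\geq 0$.
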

\begin{proof}
	By Proposition \ref{otherissues}, the only property left to show is the invariance of $K$ under $(\ref{system})$. For this, we take $\theta=1$. By Proposition \ref{proptobe}, we only need to show that $\xi(t)$ is nondecreasing. First of all, note that (\ref{system}) and $(\ref{ineq.})$ give
	\begin{align*}
		\xi'(t)&\geq\displaystyle\left(\frac{\lambda+\mu+\nu}{-\nu}\right)'-(\ln(-\nu))'+2(1-4\rho)\nu\nonumber\\
		%			&=\nu^{-2}[-2\nu^{3}-2\nu(\lambda^{2}+\mu^{2})+2\mu\lambda(\mu+\lambda)-2\nu\mu\lambda+4\rho\nu^{2}(\lambda+\mu+\nu)+2(1-4\rho)\nu^{3}]\nonumber\\
		&=\nu^{-2}[-2\nu(\lambda^{2}+\mu^{2})+2\mu\lambda(\mu+\lambda)-2\nu\mu\lambda+4\rho\nu^{2}(\lambda+\mu)-4\rho\nu^{3}]\nonumber\\
		&=\nu^{-2}I.
	\end{align*}
	In what follows we will show that $I>0$, which together with the equalities above, implies that $\xi(t)$ is nondecreasing. To reach our goal we are going to consider the following cases.
	
	Assume that $\lambda\geq0$. If $\mu\leq0$ then
	\begin{equation*}
		\begin{array}[pos]{lll}
			I&=&2(\mu-\nu)(\lambda^{2}+\lambda\mu+\mu^{2})-2\mu(\lambda^{2}+\lambda\mu+\mu^{2})+2\mu\lambda(\mu+\lambda)+4\rho\nu^{2}\lambda+4\rho\nu^{2}(\mu-\nu)\\\noalign{\smallskip}
			&=&2(\mu-\nu)(\lambda^{2}+\lambda\mu+\mu^{2})-2\mu^{3}+4\rho\nu^{2}\lambda+4\rho\nu^{2}(\mu-\nu)\\\noalign{\smallskip}
			&\geq&0.
		\end{array}
	\end{equation*}
	On the other hand, if $\mu>0$ we get
	\begin{equation*}
		\begin{array}[pos]{lll}
			I&=&-2\nu(\lambda^{2}+\mu^{2})+2\mu\lambda(\mu+\lambda)-2\nu\mu\lambda+4\rho\nu^{2}\lambda\\\noalign{\smallskip}
			&\geq&0.	
		\end{array}
	\end{equation*}
	
	Now assume that $\lambda<0$. If $2\mu\leq\nu$, then
	\begin{equation*}
		\begin{array}[pos]{lll}
			I&=&-2\nu(\lambda^{2}+\mu^{2})+2\mu\lambda(\mu+\lambda)-2\nu\mu\lambda+8\rho\nu\mu\lambda-8\rho\nu\mu\lambda+4\rho\nu^{2}\lambda\\\noalign{\smallskip}
			&=&2\lambda^{2}(\mu-\nu)+2\mu^{2}(\lambda-\nu)+2(4\rho-1)\nu\mu\lambda+4\rho\nu\lambda(\nu-2\mu)\\\noalign{\smallskip}
			&\geq&0.
		\end{array}
	\end{equation*}
	If, on the other hand $2\mu>\nu$, then
	\begin{equation*}
		\begin{array}[pos]{lll}
			I&=&-2\nu(\lambda^{2}+\mu^{2})+2\mu\lambda(\mu+\lambda)-2\nu\mu\lambda+4\rho\nu^{2}(\lambda+\mu)-4\rho\nu^{3}\\\noalign{\smallskip}
			&=&2\lambda^{2}(\mu-\nu)+2\mu^{2}(\lambda-\nu)-2\nu\mu\lambda+4\rho\nu^{2}(\lambda+\mu-\nu)\\\noalign{\smallskip}
			&\geq&2\lambda^{2}(\mu-\nu)+2\mu^{2}(\lambda-\nu)-2\nu\mu\lambda+4\rho\nu^{2}(2\mu-\nu)\\\noalign{\smallskip}
			&\geq&0.
		\end{array}
	\end{equation*}
	
	In all cases we have $I\geq0$, which finishes the proof.
\end{proof}

\address{Universidade Federal do Pará - UFPA\\
Faculdade de Matem\'{a}tica, 66075-110 Be\-l\'{e}m, Par\'{a} - PA, Brazil.\\
\email{valterborges@ufpa.br}}

\end{document}